\newtheorem{theorem}{Theorem}[section]
\newtheorem{corollary}[theorem]{Corollary}
\theoremstyle{definition}
\newtheorem{remark}[theorem]{Remark}
\numberwithin{equation}{section}
\title[A sharper bound for the Jensen's operator inequality]{A sharper bound for the Jensen's operator inequality}
\author[H. R. Moradi]{Hamid Reza Moradi}
\address[H. R. Moradi]{Department of Mathematics, Payame Noor University (PNU), P.O. Box 19395-4697, Tehran, Iran.}
\email{{\tt hrmoradi@mshdiau.ac.ir}}
\author[S. Furuichi]{Shigeru Furuichi}
\address[S. Furuichi]{Department of Information Science, College of Humanities and Sciences, Nihon University, 3-25-40, Sakurajyousui, Setagaya-ku, Tokyo, 156-8550, Japan.}
\email{\tt furuichi@chs.nihon-u.ac.jp}
\author[M. Sababheh]{Mohammed Sababheh}
\address[M. Sababheh]{Department of Basic Sciences, Princess Sumaya University For Technology,
Al Jubaiha, Amman 11941, Jordan.}
\email{\tt sababheh@psut.edu.jo}
\keywords{Operator inequality, Jensen inequality, convex function, positive linear map.}
\subjclass[2020]{Primary 47A63, Secondary 26D15, 15A60.}
\begin{document}

\begin{abstract}
The primary goal of this paper is to improve the operator version of Jensen inequality. As an application, we provide an improvement for the celebrated Ando's inequality. Additionally, we give a tight bound for the operator H\"older inequality.
\end{abstract}

\maketitle


\section{Introduction}

We begin with fixing some common notations. Let $\mathcal{H}$ be a complex Hilbert space, and $\mathcal{B}\left( \mathcal{H} \right)$ be the algebra of all bounded linear operators on $\mathcal{H}$. An operator $A$ is said to be {\it positive}  (resp. {\it strictly positive}) if and only if $\left\langle Ax,x \right\rangle \ge 0$ for all $x\in \mathcal{H}$ (resp. $\left\langle Ax,x \right\rangle >0$ for all non-zero $x\in \mathcal{H}$). For strictly positive operators $A$ and $B$, the $v$-geometric mean is defined as
\[A{{\sharp}_{v}}B={{A}^{\frac{1}{2}}}{{\left( {{A}^{-\frac{1}{2}}}B{{A}^{-\frac{1}{2}}} \right)}^{v}}{{A}^{\frac{1}{2}}}\quad\text{ }\left( v\in \left[ 0,1 \right] \right).\] 
A real-valued function $f$ defined on an interval $I$ satisfying 
\begin{equation}\label{20}
f\left( \left( 1-v \right)A+vB \right)\le \left( 1-v \right)f\left( A \right)+vf\left( B \right)\quad\text{ }\left( v\in \left[ 0,1 \right] \right)
\end{equation}
for all self-adjoint operators $A,B\in \mathcal{B}\left( \mathcal{H} \right)$ such that $\sigma \left( A \right),\sigma \left( B \right)\subset I$ is called an {\it operator convex} function, where $\sigma \left( X \right)$  means the spectrum of $X\in \mathcal{B}\left( \mathcal{H} \right)$.  The function $f$ is {\it operator concave} on $I$, if the inequality \eqref{20} is reversed. It is an essential fact that $f\left( t \right)={{t}^{r}}$, $r\in \left[ 0,1 \right]$ is operator concave on $\left( 0,\infty  \right)$ and  is operator convex for $r\in \left[ -1,0 \right]\cup \left[ 1,2 \right]$ on $\left( 0,\infty  \right)$.   

  A linear map $\Phi :\mathcal{B}\left( \mathcal{H} \right)\to \mathcal{B}\left( \mathcal{K} \right)$ is called {\it positive} (resp. {\it strictly positive}) if $\Phi \left( A \right)\ge 0$ (resp. $\Phi \left( A \right)>0$) whenever $A\ge 0$ (resp. $A>0$), and $\Phi $ is said to be {\it normalized} if $\Phi \left( {{\mathbf{1}}_{\mathcal{H}}} \right)={{\mathbf{1}}_{\mathcal{K}}}$, where $\mathbf{1}$ is the identity operator.

Let $f:I\to \mathbb{R}$ be a convex function, ${{x}_{1}},\ldots ,{{x}_{n}}\in I$ and ${{w}_{1}},\ldots ,{{w}_{n}}$ be positive numbers with ${{W}_{n}}=\sum\nolimits_{i=1}^{n}{{{w}_{i}}}$. The celebrated Jensen inequality asserts that
\begin{equation}\label{17}
f\left( \frac{1}{{{W}_{n}}}\sum\limits_{i=1}^{n}{{{w}_{i}}{{x}_{i}}} \right)\le \frac{1}{{{W}_{n}}}\sum\limits_{i=1}^{n}{{{w}_{i}}f\left( {{x}_{i}} \right)}.
\end{equation}

 The classical Jensen inequality is one of the essential inequalities in convex analysis, and it has various applications in mathematics, statistics, economics, and engineering sciences. An extensive convex analysis area, including convex functions and their inequalities, is covered in \cite{w1}. The practical applications of convex analysis are also presented in \cite{w2}.

In \cite{3}, one can find an operator form of \eqref{17} which says that if $f:I\to \mathbb{R}$ is an operator convex function, then 
\begin{equation}\label{8}
f\left( \frac{1}{{{W}_{n}}}\sum\limits_{i=1}^{n}{{{w}_{i}}{{A}_{i}}} \right)\le \frac{1}{{{W}_{n}}}\sum\limits_{i=1}^{n}{{{w}_{i}}f\left( {{A}_{i}} \right)}
\end{equation}
whenever ${{A}_{1}},\ldots ,{{A}_{n}}$ are self-adjoint operators with spectra contained in $I$.

The celebrated Choi--Davis--Jensen inequality \cite{5,6} asserts that if $f:I\to \mathbb{R}$ is an operator convex and, $\Phi :\mathcal{B}\left( \mathcal{H} \right)\to \mathcal{B}\left( \mathcal{K} \right)$ is a normalized positive linear mapping, and $A$ is a self-adjoint operator with spectrum contained in $I$, then
\begin{equation}\label{9}
f\left( \Phi \left( A \right) \right)\le \Phi \left( f\left( A \right) \right).
\end{equation}
In the past few years, a considerable attention have been put towards refining or reversing the inequalities \eqref{17}, \eqref{8}, and \eqref{9} and some related inequalities. We refer the interested reader to \cite{01,02,04,03}.

The main result of this paper is included in the next section, where we present an improvement of the operator Jensen inequality inspired by the observation of Dragomir in \cite{dragomir}. This refinement enables us to improve the celebrated Ando's inequality. Additionally, we will refine a known result by Hansen, which is related to the perspective of operator convex functions and positive linear maps. 

\section{Main Results}
As  mentioned in \cite[Corollary 1]{4}, if $f:I\to \mathbb{R}$ is a convex function,  ${{A}_{1}},\ldots ,{{A}_{n}}$ are self-adjoint operators with spectra contained in $I$, and ${{w}_{1}},\ldots ,{{w}_{n}}$ are positive numbers such that $\sum\nolimits_{i=1}^{n}{{{w}_{i}}}=1$, then 
\begin{equation}\label{1}
f\left( \sum\limits_{i=1}^{n}{{{w}_{i}}\left\langle {{A}_{i}}x,x \right\rangle } \right)\le \sum\limits_{i=1}^{n}{{{w}_{i}}\left\langle f\left( {{A}_{i}} \right)x,x \right\rangle }
\end{equation}
where $x\in \mathcal{H}$ with $\left\| x \right\|=1$. 

In the following theorem, we make a refinement of the inequality \eqref{1}.
\begin{theorem}\label{24}
Let $f:I\to \mathbb{R}$ be a convex function,  ${{A}_{1}},\ldots ,{{A}_{n}}$ be self-adjoint operators with spectra contained in $I$, and ${{w}_{1}},\ldots ,{{w}_{n}}$ be positive numbers such that $\sum\nolimits_{i=1}^{n}{{{w}_{i}}}=1$. Assume   $J\subsetneq  \left\{ 1,2,\ldots ,n \right\}$ and  ${{J}^{c}}=\left\{ 1,2,\ldots ,n \right\}\backslash J$,  ${{\omega }_{J}}\equiv \sum\limits_{i\in J}{{{w}_{i}}}$, ${{\omega }_{{{J}^{c}}}}=1-\sum\limits_{i\in J}{{{w}_{i}}}$. Then for any $x\in \mathcal{H}$ with $\left\| x \right\|=1$,
\begin{equation}\label{18}
f\left( \sum\limits_{i=1}^{n}{{{w}_{i}}\left\langle {{A}_{i}}x,x \right\rangle } \right)\le \Psi \left( f,\mathbb{A},J,{{J}^{c}} \right)\le \sum\limits_{i=1}^{n}{{{w}_{i}}\left\langle f\left( {{A}_{i}} \right)x,x \right\rangle }
\end{equation}
where
\[\Psi \left( f,\mathbb{A},J,{{J}^{c}} \right)\equiv {{\omega }_{J}}f\left( \frac{1}{{{\omega }_{J}}}\sum\limits_{i\in J}{{{w}_{i}}\left\langle {{A}_{i}}x,x \right\rangle } \right)+{{\omega }_{{{J}^{c}}}}f\left( \frac{1}{{{\omega }_{{{J}^{c}}}}}\sum\limits_{i\in {{J}^{c}}}{{{w}_{i}}\left\langle {{A}_{i}}x,x \right\rangle } \right).\]
The inequality \eqref{18} is reversed if the function $f$ is concave on $I$.
\end{theorem}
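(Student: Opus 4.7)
The plan is to exploit the partition $\{J,J^{c}\}$ to split the argument into two independent Jensen-type steps: the right inequality in \eqref{18} will follow by applying the already available inequality \eqref{1} to the two subfamilies indexed by $J$ and $J^{c}$ separately, while the left inequality will follow from ordinary scalar convexity of $f$ applied to a two-term convex combination.

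For the right inequality, I would first renormalize: the numbers $\{w_i/\omega_J\}_{i\in J}$ are positive and sum to $1$, and likewise for $\{w_i/\omega_{J^c}\}_{i\in J^c}$. Since each $A_i$ is self-adjoint with spectrum in $I$, applying the already-known inequality \eqref{1} to the subfamily $\{A_i\}_{i\in J}$ with these renormalized weights yields
\[
f\!\left(\frac{1}{\omega_{J}}\sum_{i\in J} w_i\langle A_i x,x\rangle\right) \le \frac{1}{\omega_{J}}\sum_{i\in J} w_i\langle f(A_i)x,x\rangle,
\]
and analogously for $J^{c}$. Multiplying these two inequalities by $\omega_{J}$ and $\omega_{J^{c}}$ respectively and summing yields precisely $\Psi(f,\mathbb{A},J,J^{c})\le \sum_{i=1}^{n}w_i\langle f(A_i)x,x\rangle$.

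For the left inequality, the key is the algebraic identity
\[
\sum_{i=1}^{n} w_i\langle A_i x,x\rangle \;=\; \omega_{J}\!\left(\frac{1}{\omega_{J}}\sum_{i\in J} w_i\langle A_i x,x\rangle\right) + \omega_{J^{c}}\!\left(\frac{1}{\omega_{J^{c}}}\sum_{i\in J^{c}} w_i\langle A_i x,x\rangle\right),
\]
which presents the global weighted mean as a convex combination of two scalars with coefficients $\omega_{J},\omega_{J^{c}}\ge 0$ summing to $1$. Both scalars lie in $I$, because each $\langle A_i x,x\rangle$ belongs to the convex hull of $\sigma(A_i)\subset I$ and $I$ is an interval. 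Ordinary (scalar) convexity of $f$ then gives $f\!\left(\sum_{i=1}^{n}w_i\langle A_i x,x\rangle\right)\le \Psi(f,\mathbb{A},J,J^{c})$.

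The concave case is obtained by applying the convex case to $-f$ and reversing every inequality. Overall there is no genuine analytic obstacle; the whole statement is a bookkeeping refinement of \eqref{1}. The only points that require care are the renormalization of the subweights to sum to $1$ (so that \eqref{1} is applicable to each subfamily) and the verification that the two intermediate arguments of $f$ still lie in $I$, both of which are immediate from the convexity of $I$.
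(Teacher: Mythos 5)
Your proposal is correct and follows essentially the same route as the paper: the right-hand inequality comes from applying the vector-state Jensen inequality \eqref{1} (the paper re-derives it as \eqref{7}) to the renormalized subfamilies indexed by $J$ and $J^{c}$, and the left-hand inequality comes from scalar convexity applied to the two-term convex combination with weights $\omega_{J},\omega_{J^{c}}$. Your explicit check that the two intermediate arguments lie in $I$ is a detail the paper leaves implicit, but the argument is the same.
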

\begin{proof}
We can replace ${{x}_{i}}$ by $\left\langle {{A}_{i}}x,x \right\rangle $ where $x\in \mathcal{H}$ and $\left\| x \right\|=1$, in \eqref{17}. Hence, by using \cite[Theorem 1.2]{1}, we can immediately infer that
\begin{equation}\label{7}
\begin{aligned}
f\left( \frac{1}{{{W}_{n}}}\sum\limits_{i=1}^{n}{{{w}_{i}}\left\langle {{A}_{i}}x,x \right\rangle } \right)&\le \frac{1}{{{W}_{n}}}\sum\limits_{i=1}^{n}{{{w}_{i}}f\left( \left\langle {{A}_{i}}x,x \right\rangle  \right)} \\ 
& \le \frac{1}{{{W}_{n}}}\sum\limits_{i=1}^{n}{{{w}_{i}}\left\langle f\left( {{A}_{i}} \right)x,x \right\rangle } 
\end{aligned}
\end{equation}
where ${{W}_{n}}=\sum\nolimits_{i=1}^{n}{{{w}_{i}}}$. Now, a simple calculation shows that
\begin{equation}\label{3}
\begin{aligned}
\sum\limits_{i=1}^{n}{{{w}_{i}}\left\langle f\left( {{A}_{i}} \right)x,x \right\rangle }&=\sum\limits_{i\in J}{{{w}_{i}}\left\langle f\left( {{A}_{i}} \right)x,x \right\rangle }+\sum\limits_{i\in {{J}^{c}}}{{{w}_{i}}\left\langle f\left( {{A}_{i}} \right)x,x \right\rangle } \\ 
& ={{\omega }_{J}}\left( \frac{1}{{{\omega }_{J}}}\sum\limits_{i\in J}{{{w}_{i}}\left\langle f\left( {{A}_{i}} \right)x,x \right\rangle } \right)+{{\omega }_{{{J}^{c}}}}\left( \frac{1}{{{\omega }_{{{J}^{c}}}}}\sum\limits_{i\in {{J}^{c}}}{{{w}_{i}}\left\langle f\left( {{A}_{i}} \right)x,x \right\rangle } \right) \\ 
& \ge {{\omega }_{J}}f\left( \frac{1}{{{\omega }_{J}}}\sum\limits_{i\in J}{{{w}_{i}}\left\langle {{A}_{i}}x,x \right\rangle } \right)+{{\omega }_{{{J}^{c}}}}f\left( \frac{1}{{{\omega }_{{{J}^{c}}}}}\sum\limits_{i\in {{J}^{c}}}{{{w}_{i}}\left\langle {{A}_{i}}x,x \right\rangle } \right) \\ 
& =\Psi \left( f,\mathbb{A},J,{{J}^{c}} \right)  
\end{aligned}
\end{equation}
where we used the inequality \eqref{7}. On the other hand,
\begin{eqnarray}\label{2}
 \Psi \left( f,\mathbb{A},J,{{J}^{c}} \right)&=&{{\omega }_{J}}f\left( \frac{1}{{{\omega }_{J}}}\sum\limits_{i\in J}{{{w}_{i}}\left\langle {{A}_{i}}x,x \right\rangle } \right)+{{\omega }_{{{J}^{c}}}}f\left( \frac{1}{{{\omega }_{{{J}^{c}}}}}\sum\limits_{i\in {{J}^{c}}}{{{w}_{i}}\left\langle {{A}_{i}}x,x \right\rangle } \right) \nonumber \\ 
& \ge& f\left( {{\omega }_{J}}\left( \frac{1}{{{\omega }_{J}}}\sum\limits_{i\in J}{{{w}_{i}}\left\langle {{A}_{i}}x,x \right\rangle } \right)+{{\omega }_{{{J}^{c}}}}\left( \frac{1}{{{\omega }_{{{J}^{c}}}}}\sum\limits_{i\in {{J}^{c}}}{{{w}_{i}}\left\langle {{A}_{i}}x,x \right\rangle } \right) \right)\nonumber \\ 
& =&f\left( \sum\limits_{i=1}^{n}{{{w}_{i}}\left\langle {{A}_{i}}x,x \right\rangle } \right).  
\end{eqnarray}
In the above computations we have used the assumption that $f$ is a convex function.

Now \eqref{3} together with inequality \eqref{2} yield the inequality \eqref{18}.
\end{proof}

The following refinements of the arithmetic--geometric--harmonic  mean inequality are of interest.
\begin{corollary}
	Let ${{a}_{1}},\ldots ,{{a}_{n}}$ be positive numbers and let $\left\{ {{w}_{i}} \right\},J,{{J}^{c}}$ be as in Theorem \ref{24}. Then
\begin{eqnarray*}
&& {{\left( \sum\limits_{i=1}^{n}{{{w}_{i}}a_{i}^{-1}} \right)}^{-1}}\le {{\left( \frac{1}{{{\omega }_{J}}}\sum\limits_{i\in J}{{{w}_{i}}a_{i}^{-1}} \right)}^{-{{\omega }_{J}}}}{{\left( \frac{1}{{{\omega }_{{{J}^{c}}}}}\sum\limits_{i\in {{J}^{c}}}{{{w}_{i}}a_{i}^{-1}} \right)}^{-{{\omega }_{{{J}^{c}}}}}} \\ 
&& \le \prod\limits_{i=1}^{n}{a_{i}^{{{w}_{i}}}}  \le {{\left( \frac{1}{{{\omega }_{J}}}\sum\limits_{i\in J}{{{w}_{i}}{{a}_{i}}} \right)}^{{{\omega }_{J}}}}{{\left( \frac{1}{{{\omega }_{{{J}^{c}}}}}\sum\limits_{i\in {{J}^{c}}}{{{w}_{i}}{{a}_{i}}} \right)}^{{{\omega }_{{{J}^{c}}}}}} 
\le \sum\limits_{i=1}^{n}{{{w}_{i}}{{a}_{i}}}  
\end{eqnarray*}
and
\begin{eqnarray*}
&& {{\left( \sum\limits_{i=1}^{n}{{{w}_{i}}a_{i}^{-1}} \right)}^{-1}}\le {{\left( {{\omega }_{J}}\prod\limits_{i\in J}{a_{i}^{-\frac{{{w}_{i}}}{{{\omega }_{J}}}}}+{{\omega }_{{{J}^{c}}}}\prod\limits_{i\in {{J}^{c}}}{a_{i}^{-\frac{{{w}_{i}}}{{{\omega }_{{{J}^{c}}}}}}} \right)}^{-1}} \\ 
&& \le \prod\limits_{i=1}^{n}{a_{i}^{{{w}_{i}}}} \le {{\omega }_{J}}\prod\limits_{i\in J}{a_{i}^{\frac{{{w}_{i}}}{{{\omega }_{J}}}}}+{{\omega }_{{{J}^{c}}}}\prod\limits_{i\in {{J}^{c}}}{a_{i}^{\frac{{{w}_{i}}}{{{\omega }_{{{J}^{c}}}}}}} \le \sum\limits_{i=1}^{n}{{{w}_{i}}{{a}_{i}}}.  
\end{eqnarray*}
\end{corollary}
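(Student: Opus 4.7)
The corollary is a pair of scalar inequality chains, so the plan is to apply Theorem \ref{24} in its scalar form: take $\mathcal{H}=\mathbb{C}$, $x=1$, and let $A_i$ be the scalar operator given by $a_i$ (or some simple function of $a_i$). With this reduction, Theorem \ref{24} specializes to the purely scalar refinement
\[
f\Bigl(\sum_{i=1}^n w_i c_i\Bigr)\le \omega_J f\Bigl(\tfrac{1}{\omega_J}\sum_{i\in J}w_i c_i\Bigr)+\omega_{J^c}f\Bigl(\tfrac{1}{\omega_{J^c}}\sum_{i\in J^c}w_i c_i\Bigr)\le \sum_{i=1}^n w_i f(c_i),
\]
valid for convex $f$, with the chain reversed if $f$ is concave. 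The strategy is to run this scalar refinement through four carefully chosen substitutions.

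First, for the upper half of the first chain (the AM--GM refinement), I would apply the reversed inequality to the concave function $f(t)=\log t$ with $c_i=a_i$, and then exponentiate. This produces exactly
\[
\prod_{i=1}^n a_i^{w_i}\le \Bigl(\tfrac{1}{\omega_J}\sum_{i\in J}w_i a_i\Bigr)^{\!\omega_J}\!\Bigl(\tfrac{1}{\omega_{J^c}}\sum_{i\in J^c}w_i a_i\Bigr)^{\!\omega_{J^c}}\le \sum_{i=1}^n w_i a_i.
\]
Next, for the lower half (the HM--GM refinement), I would do the same with $c_i=a_i^{-1}$, exponentiate, and then invert (reversing all inequalities and flipping the exponents $\omega_J,\omega_{J^c}$ to $-\omega_J,-\omega_{J^c}$). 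This yields the bottom half of the first chain.

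For the second chain, the substitution is analogous but with the convex function $f(t)=e^t$. Applying Theorem \ref{24} directly with $c_i=\log a_i$ gives the upper half
\[
\prod_{i=1}^n a_i^{w_i}\le \omega_J\prod_{i\in J}a_i^{w_i/\omega_J}+\omega_{J^c}\prod_{i\in J^c}a_i^{w_i/\omega_{J^c}}\le \sum_{i=1}^n w_i a_i,
\]
and applying it with $c_i=-\log a_i$ followed by inversion produces the lower half. The AM--HM bound at the extreme ends is automatic from the refinements themselves, or equivalently from the classical Jensen inequality.

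The argument is largely a matter of bookkeeping; the only mild subtlety is keeping track of the direction of inequality when switching between the concave function $\log$ and the convex function $\exp$, and when inverting to pass from the AM--GM form to the HM--GM form. In particular, reciprocation reverses each $\le$ in the chain and simultaneously converts each weighted product on the right into a product with negative exponents, which is exactly the form appearing in the lower halves of both chains. No further tools beyond Theorem \ref{24} and elementary monotonicity of $t\mapsto e^t$ and $t\mapsto t^{-1}$ on $(0,\infty)$ are needed.
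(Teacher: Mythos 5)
Your proof is correct: the paper states this corollary without proof as an immediate consequence of Theorem \ref{24}, and your derivation—specializing the theorem to scalars and running it with the concave function $\log t$ (for the weighted-power-mean chain) and the convex function $e^t$ (for the second chain), using the substitutions $a_i$, $a_i^{-1}$, $\log a_i$, $-\log a_i$ and then exponentiating or reciprocating as needed—is exactly the intended route. All directions of inequality check out, so there is nothing to add.
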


By virtue of Theorem \ref{24}, we have the following result:
\begin{corollary}\label{16}
	Let $f:I\to \mathbb{R}$ be a non-negative increasing convex function,  ${{A}_{1}},\ldots ,{{A}_{n}}$ be positive operators with spectra contained in $I$, and let $\left\{ {{w}_{i}} \right\},J,{{J}^{c}}$ be as in Theorem \ref{24}. Then
	\begin{eqnarray}\label{11}
f\left( \left\| \sum\limits_{i=1}^{n}{{{w}_{i}}{{A}_{i}}} \right\| \right)&\le& {{\omega }_{J}}f\left( \frac{1}{{{\omega }_{J}}}\left\| \sum\limits_{i=1}^{n}{{{w}_{i}}{{A}_{i}}} \right\| \right)+{{\omega }_{{{J}^{c}}}}f\left( \frac{1}{{{\omega }_{{{J}^{c}}}}}\left\| \sum\limits_{i=1}^{n}{{{w}_{i}}{{A}_{i}}} \right\| \right)\nonumber\\
&\le& \left\| \sum\limits_{i=1}^{n}{{{w}_{i}}f\left( {{A}_{i}} \right)} \right\|.
	\end{eqnarray}
	The inequality \eqref{11} is reversed if the function $f$ is non-negative increasing concave on $I$.
\end{corollary}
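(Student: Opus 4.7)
The plan is to deduce Corollary \ref{16} from Theorem \ref{24} by applying it pointwise to scalars of the form $\langle A_ix,x\rangle$ and combining with the characterization $\|B\|=\sup_{\|x\|=1}\langle Bx,x\rangle$ valid for positive operators $B$. Set $S=\sum_{i=1}^n w_iA_i$, $P=\sum_{i\in J}w_iA_i$, $Q=\sum_{i\in J^c}w_iA_i$, so that $S,P,Q\ge 0$ and $S=P+Q$.

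For the leftmost inequality of \eqref{11}, I would use the triangle inequality $\|S\|\le\|P\|+\|Q\|=\omega_J\tfrac{\|P\|}{\omega_J}+\omega_{J^c}\tfrac{\|Q\|}{\omega_{J^c}}$. Since $f$ is non-negative and increasing, monotonicity yields $f(\|S\|)\le f\!\left(\omega_J\tfrac{\|P\|}{\omega_J}+\omega_{J^c}\tfrac{\|Q\|}{\omega_{J^c}}\right)$, after which convexity of $f$ applied with the weights $\omega_J,\omega_{J^c}$ (which sum to $1$) collapses the right-hand side to the middle expression.

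For the upper bound I would apply Theorem \ref{24} to the scalars $\{\langle A_ix,x\rangle\}_{i=1}^{n}$ coming from an arbitrary unit vector $x$; since these lie in $I$ and $f$ is convex,
\[\omega_Jf\!\left(\tfrac{1}{\omega_J}\langle Px,x\rangle\right)+\omega_{J^c}f\!\left(\tfrac{1}{\omega_{J^c}}\langle Qx,x\rangle\right)\le\sum_{i=1}^{n}w_i\langle f(A_i)x,x\rangle\le\left\|\sum_{i=1}^{n}w_if(A_i)\right\|.\]
Taking a supremum over unit vectors $x$ on the left, together with continuity and monotonicity of $f$ and the identities $\|P\|=\sup_{\|x\|=1}\langle Px,x\rangle$ and $\|Q\|=\sup_{\|x\|=1}\langle Qx,x\rangle$, should upgrade the pointwise scalar inner products to the operator norms and recover the middle-to-right estimate in \eqref{11}.

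The principal obstacle is that no single unit vector $x$ simultaneously realizes both $\|P\|$ and $\|Q\|$, so some care is required when passing from the pointwise inequality to the norm statement. The plan is to exploit non-negativity of each summand in the middle expression to decouple the two blocks—choosing a maximizing sequence for $\langle Px,x\rangle$ alone handles the first summand, and similarly for the second—then recombine. The concave case follows by running the same scheme with the roles of Jensen reversed and monotonicity used in the opposite direction where applicable.
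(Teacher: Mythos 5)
Your decomposition $S=P+Q$ and your argument for the first inequality are sound: the triangle inequality, monotonicity, and scalar convexity give $f(\|S\|)\le \omega_J f(\|P\|/\omega_J)+\omega_{J^c}f(\|Q\|/\omega_{J^c})$ directly, which is more elementary than the paper's route (the paper applies Theorem \ref{24} at each unit vector and takes suprema). Note in passing that the middle term of \eqref{11} is printed with $\sum_{i=1}^{n}$ inside all three norms; like the paper's own proof, you are really working with $\|P\|=\bigl\|\sum_{i\in J}w_iA_i\bigr\|$ and $\|Q\|=\bigl\|\sum_{i\in J^c}w_iA_i\bigr\|$, which is clearly the intended reading.

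The obstacle you flag for the second inequality is not a removable technicality; it is a genuine gap that cannot be closed, because that inequality is false. Decoupling the blocks as you propose yields only $\omega_Jf(\|P\|/\omega_J)\le\bigl\|\sum_{i\in J}w_if(A_i)\bigr\|$ and $\omega_{J^c}f(\|Q\|/\omega_{J^c})\le\bigl\|\sum_{i\in J^c}w_if(A_i)\bigr\|$; summing gives a bound that \emph{dominates} $\bigl\|\sum_{i=1}^{n}w_if(A_i)\bigr\|$ by the triangle inequality, so no recombination recovers \eqref{11}. Concretely, take $n=2$, $w_1=w_2=\tfrac12$, $J=\{1\}$, $f(t)=t$ on $[0,2]$, $A_1=\operatorname{diag}(2,0)$, $A_2=\operatorname{diag}(0,2)$: the middle term equals $\tfrac12 f(2)+\tfrac12 f(2)=2$, while $\bigl\|\tfrac12 f(A_1)+\tfrac12 f(A_2)\bigr\|=\|\operatorname{diag}(1,1)\|=1$. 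The paper's own proof commits the same error: Theorem \ref{24} does give $\sup_{\|x\|=1}\Psi(f,\mathbb{A},J,J^c)\le\bigl\|\sum_{i}w_if(A_i)\bigr\|$, but the displayed middle term is $\sup_x g(x)+\sup_x h(x)$ with $g(x)=\omega_Jf(\tfrac{1}{\omega_J}\langle Px,x\rangle)$ and $h(x)=\omega_{J^c}f(\tfrac{1}{\omega_{J^c}}\langle Qx,x\rangle)$, and this is $\ge\sup_x\bigl(g(x)+h(x)\bigr)$, not $\le$. What actually survives is the chain $f(\|S\|)\le\sup_{\|x\|=1}\Psi\le\bigl\|\sum_iw_if(A_i)\bigr\|$, i.e.\ the refinement holds with the supremum of $\Psi$ as the middle term, not with the norm expression appearing in \eqref{11}.
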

\begin{proof}
On account of the assumptions, we have
\begin{eqnarray*}
\underset{\left\| x \right\|=1}{\mathop{\sup }}\,f\left( \sum\limits_{i=1}^{n}{{{w}_{i}}\left\langle {{A}_{i}}x,x \right\rangle } \right)
&=&f\left( \underset{\left\| x \right\|=1}{\mathop{\sup }}\,\left\langle \sum\limits_{i=1}^{n}{{{w}_{i}}{{A}_{i}}}x,x \right\rangle  \right)  
=f\left( \left\| \sum\limits_{i=1}^{n}{{{w}_{i}}{{A}_{i}}} \right\| \right) \\ 
&\le& {{\omega }_{J}}f\left( \frac{1}{{{\omega }_{J}}}\left\| \sum\limits_{i\in J}{{{w}_{i}}{{A}_{i}}} \right\| \right)+{{\omega }_{{{J}^{c}}}}f\left( \frac{1}{{{\omega }_{{{J}^{c}}}}}\left\| \sum\limits_{i\in {{J}^{c}}}{{{w}_{i}}{{A}_{i}}} \right\| \right) \\ 
&\le& \underset{\left\| x \right\|=1}{\mathop{\sup }}\,\left\langle \sum\limits_{i=1}^{n}{{{w}_{i}}f\left( {{A}_{i}} \right)}x,x \right\rangle   
=\left\| \sum\limits_{i=1}^{n}{{{w}_{i}}f\left( {{A}_{i}} \right)} \right\|.  
\end{eqnarray*}
This completes the proof.
\end{proof}

The following remark is worth mentioning.
\begin{remark}
Let  ${{A}_{1}},\ldots ,{{A}_{n}}$ be positive operators and let $\left\{ {{w}_{i}} \right\},J,{{J}^{c}}$ be as in Theorem \ref{24}. Then for any $r\ge 1$,
\begin{eqnarray}\label{12}
{{\left\| \sum\limits_{i=1}^{n}{{{w}_{i}}{{A}_{i}}} \right\|}^{r}}&\le& {{\omega }_{J}}{{\left( \frac{1}{{{\omega }_{J}}}\left\| \sum\limits_{i=1}^{n}{{{w}_{i}}{{A}_{i}}} \right\| \right)}^{r}}+{{\omega }_{{{J}^{c}}}}{{\left( \frac{1}{{{\omega }_{{{J}^{c}}}}}\left\| \sum\limits_{i=1}^{n}{{{w}_{i}}{{A}_{i}}} \right\| \right)}^{r}}\nonumber \\
&\le& \left\| \sum\limits_{i=1}^{n}{{{w}_{i}}A_{i}^{r}} \right\|.
\end{eqnarray}
For $0<r\le 1$, the reverse inequalities hold. If the operators are strictly positive, then the above inequality is also true for $r<0$.
\end{remark}

The multiple version of the inequality \eqref{9} is proved in \cite[Theorem 1]{2} as follows: Let $f:I\to \mathbb{R}$ be an operator convex function,  ${{\Phi }_{1}},\ldots ,{{\Phi }_{n}}$ be normalized positive linear mappings from $\mathcal{B}\left( \mathcal{H} \right)$ to $\mathcal{B}\left( \mathcal{K} \right)$, ${{A}_{1}},\ldots ,{{A}_{n}}$ be self-adjoint operators with spectra contained in $I$, and ${{w}_{1}},\ldots ,{{w}_{n}}$ be  positive numbers such that $\sum\nolimits_{i=1}^{n}{{{w}_{i}}}=1$, then
\begin{equation}\label{10}
f\left( \sum\limits_{i=1}^{n}{{{w}_{i}}{{\Phi }_{i}}\left( {{A}_{i}} \right)} \right)\le \sum\limits_{i=1}^{n}{{{w}_{i}}f\left( {{\Phi }_{i}}\left( {{A}_{i}} \right) \right)}.
\end{equation}

The following is a refinement of \eqref{10}. This result was found by Moslehian and Kian \cite[Corollary 3.2]{mok}, with a different expression. However, we mimic some ideas of Dragomir \cite[Theorem 1]{dragomir} to obtain it.
\begin{theorem}\label{13}
	Let $f:I\to \mathbb{R}$ be an operator convex function,  ${{\Phi }_{1}},\ldots ,{{\Phi }_{n}}$ be normalized positive linear mappings from $\mathcal{B}\left( \mathcal{H} \right)$ to $\mathcal{B}\left( \mathcal{K} \right)$, ${{A}_{1}},\ldots ,{{A}_{n}}$ be self-adjoint operators with spectra contained in $I$, and let $\left\{ {{w}_{i}} \right\},J,{{J}^{c}}$ be as in Theorem \ref{24}. Then
	\begin{equation}\label{19}
f\left( \sum\limits_{i=1}^{n}{{{w}_{i}}{{\Phi }_{i}}\left( {{A}_{i}} \right)} \right)\le \Delta \left( f,\mathbb{A},J,{{J}^{c}} \right)\le \sum\limits_{i=1}^{n}{{{w}_{i}}{{\Phi }_{i}}\left( f\left( {{A}_{i}} \right) \right)}
	\end{equation}
where
\[\Delta \left( f,\mathbb{A},J,{{J}^{c}} \right)\equiv {{\omega }_{J}}f\left( \frac{1}{{{\omega }_{J}}}\sum\limits_{i\in J}{{{w}_{i}}{{\Phi }_{i}}\left( {{A}_{i}} \right)} \right)+{{\omega }_{{{J}^{c}}}}f\left( \frac{1}{{{\omega }_{{{J}^{c}}}}}\sum\limits_{i\in {{J}^{c}}}{{{w}_{i}}{{\Phi }_{i}}\left( {{A}_{i}} \right)} \right).\]
The inequality \eqref{19} reversed if the function $f$ is operator concave on $I$.
\end{theorem}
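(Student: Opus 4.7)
The plan is to mirror the two-step structure used in the proof of Theorem \ref{24}, but now at the operator level (using operator convexity and the multi-map inequality \eqref{10}) rather than at the level of scalar averages $\langle A_i x, x\rangle$.

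For the left inequality $f\bigl(\sum_i w_i\Phi_i(A_i)\bigr)\le \Delta(f,\mathbb{A},J,J^c)$, I would rewrite the argument of $f$ on the left as a convex combination of the two averages appearing inside $\Delta$:
\[
\sum_{i=1}^n w_i\Phi_i(A_i)=\omega_J\!\left(\tfrac{1}{\omega_J}\sum_{i\in J}w_i\Phi_i(A_i)\right)+\omega_{J^c}\!\left(\tfrac{1}{\omega_{J^c}}\sum_{i\in J^c}w_i\Phi_i(A_i)\right),
\]
which is legitimate because $\omega_J+\omega_{J^c}=1$ and both averaged sums are self-adjoint with spectrum in the convex hull of $I$ (this uses that each $\Phi_i$ is a normalized positive linear map, so $\Phi_i(A_i)$ has spectrum in $I$). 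Applying operator convexity of $f$ to this pair then gives the desired inequality directly.

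For the right inequality $\Delta(f,\mathbb{A},J,J^c)\le \sum_i w_i\Phi_i(f(A_i))$, the key observation is that the coefficients $\{w_i/\omega_J\}_{i\in J}$ form a probability distribution on $J$, as do $\{w_i/\omega_{J^c}\}_{i\in J^c}$ on $J^c$. Thus I can apply the multi-map operator Jensen inequality \eqref{10} twice, once on each block:
\[
f\!\left(\sum_{i\in J}\tfrac{w_i}{\omega_J}\Phi_i(A_i)\right)\le \sum_{i\in J}\tfrac{w_i}{\omega_J}\Phi_i(f(A_i)),
\]
and the analogous inequality with $J$ replaced by $J^c$. Multiplying the first by $\omega_J$, the second by $\omega_{J^c}$, and adding yields exactly $\Delta(f,\mathbb{A},J,J^c)\le \sum_{i=1}^n w_i\Phi_i(f(A_i))$.

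The concave case follows by applying the convex statement to $-f$. I do not anticipate a real obstacle: the only subtle point is the bookkeeping that ensures \eqref{10} is being applied with genuine probability weights on each of $J$ and $J^c$, which is handled by the normalization by $\omega_J$ and $\omega_{J^c}$. Everything else is algebraic rearrangement in the spirit of \eqref{3} and \eqref{2} from Theorem \ref{24}, lifted from the scalar-valued functional $x\mapsto\langle\,\cdot\,x,x\rangle$ to the operator-valued setting afforded by operator convexity.
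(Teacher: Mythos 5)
Your proposal is correct and follows essentially the same route as the paper: the right-hand inequality is obtained by applying the blockwise operator Jensen inequality \eqref{10} with the normalized weights $w_i/\omega_J$ and $w_i/\omega_{J^c}$ (the paper's step \eqref{6}), and the left-hand inequality by applying operator convexity to the convex combination with weights $\omega_J,\omega_{J^c}$ (the paper's step \eqref{5}). Your explicit remark that each $\Phi_i(A_i)$ has spectrum in $I$ is a welcome detail the paper leaves implicit.
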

\begin{proof}
It can be easily shown that
		\begin{equation}\label{4}
	f\left( \frac{1}{{{W}_{n}}}\sum\limits_{i=1}^{n}{{{w}_{i}}{{\Phi }_{i}}\left( {{A}_{i}} \right)} \right)\le \frac{1}{{{W}_{n}}}\sum\limits_{i=1}^{n}{{{w}_{i}}{{\Phi }_{i}}\left( f\left( {{A}_{i}} \right) \right)}	
	\end{equation}
	where ${{W}_{n}}=\sum\nolimits_{i=1}^{n}{{{w}_{i}}}$. By employing the inequality \eqref{4} we have
\begin{eqnarray}\label{6}
\sum\limits_{i=1}^{n}{{{w}_{i}}{{\Phi }_{i}}\left( f\left( {{A}_{i}} \right) \right)}&=&\sum\limits_{i\in J}{{{w}_{i}}{{\Phi }_{i}}\left( f\left( {{A}_{i}} \right) \right)}+\sum\limits_{i\in {{J}^{c}}}{{{w}_{i}}{{\Phi }_{i}}\left( f\left( {{A}_{i}} \right) \right)} \nonumber\\ 
& =&{{\omega }_{J}}\left( \frac{1}{{{\omega }_{J}}}\sum\limits_{i\in J}{{{w}_{i}}{{\Phi }_{i}}\left( f\left( {{A}_{i}} \right) \right)} \right)+{{\omega }_{{{J}^{c}}}}\left( \frac{1}{{{\omega }_{{{J}^{c}}}}}\sum\limits_{i\in {{J}^{c}}}{{{w}_{i}}{{\Phi }_{i}}\left( f\left( {{A}_{i}} \right) \right)} \right) \nonumber\\ 
& \ge& {{\omega }_{J}}f\left( \frac{1}{{{\omega }_{J}}}\sum\limits_{i\in J}{{{w}_{i}}{{\Phi }_{i}}\left( {{A}_{i}} \right)} \right)+{{\omega }_{{{J}^{c}}}}f\left( \frac{1}{{{\omega }_{{{J}^{c}}}}}\sum\limits_{i\in {{J}^{c}}}{{{w}_{i}}{{\Phi }_{i}}\left( {{A}_{i}} \right)} \right) \nonumber \\ 
& =&\Delta \left( f,\mathbb{A},J,{{J}^{c}} \right).  
\end{eqnarray}	
On the other hand, since $f$ is an operator convex function, we get
\begin{eqnarray}\label{5}
\Delta \left( f,\mathbb{A},J,{{J}^{c}} \right)&=&{{\omega }_{J}}f\left( \frac{1}{{{\omega }_{J}}}\sum\limits_{i\in J}{{{w}_{i}}{{\Phi }_{i}}\left( {{A}_{i}} \right)} \right)+{{\omega }_{{{J}^{c}}}}f\left( \frac{1}{{{\omega }_{{{J}^{c}}}}}\sum\limits_{i\in {{J}^{c}}}{{{w}_{i}}{{\Phi }_{i}}\left( {{A}_{i}} \right)} \right) \nonumber\\ 
& \ge& f\left( {{\omega }_{J}}\left( \frac{1}{{{\omega }_{J}}}\sum\limits_{i\in J}{{{w}_{i}}{{\Phi }_{i}}\left( {{A}_{i}} \right)} \right)+{{\omega }_{{{J}^{c}}}}\left( \frac{1}{{{\omega }_{{{J}^{c}}}}}\sum\limits_{i\in {{J}^{c}}}{{{w}_{i}}{{\Phi }_{i}}\left( {{A}_{i}} \right)} \right) \right) \nonumber \\ 
& =&f\left( \sum\limits_{i=1}^{n}{{{w}_{i}}{{\Phi }_{i}}\left( {{A}_{i}} \right)} \right).  
\end{eqnarray}
Combining the two inequalities \eqref{6} and \eqref{5}, we have the desired inequality.
\end{proof}

A special case of \eqref{19} is the following statement:
\begin{remark}\label{21}
		Let ${{\Phi }_{1}},\ldots ,{{\Phi }_{n}}$ be normalized positive linear mappings from $\mathcal{B}\left( \mathcal{H} \right)$ to $\mathcal{B}\left( \mathcal{K} \right)$, ${{A}_{1}},\ldots ,{{A}_{n}}$ be self-adjoint operators with spectra contained in $I$, and let $\left\{ {{w}_{i}} \right\},J,{{J}^{c}}$ be as in Theorem \ref{24}. Then for any $r\in \left[ -1,0 \right]\cup \left[ 1,2 \right]$,
\begin{eqnarray*}
{{\left( \sum\limits_{i=1}^{n}{{{w}_{i}}{{\Phi }_{i}}\left( {{A}_{i}} \right)} \right)}^{r}}&\le& {{\omega }_{J}}{{\left( \frac{1}{{{\omega }_{J}}}\sum\limits_{i\in J}{{{w}_{i}}{{\Phi }_{i}}\left( {{A}_{i}} \right)} \right)}^{r}}+{{\omega }_{{{J}^{c}}}}{{\left( \frac{1}{{{\omega }_{{{J}^{c}}}}}\sum\limits_{i\in {{J}^{c}}}{{{w}_{i}}{{\Phi }_{i}}\left( {{A}_{i}} \right)} \right)}^{r}}\\
&\le& \sum\limits_{i=1}^{n}{{{w}_{i}}{{\Phi }_{i}}\left( A_{i}^{r} \right)}.
\end{eqnarray*}
For $r\in \left[ 0,1 \right]$, the reverse inequalities hold.
\end{remark}

The next corollary can be compared to \cite[Theorem 1]{hirza}.
\begin{corollary}
		Let $\mathcal{H}$ and $\mathcal{K}$ be finite dimensional  Hilbert spaces, ${{\Phi }_{1}},\ldots ,{{\Phi }_{n}}$ be normalized positive linear mappings from $\mathcal{B}\left( \mathcal{H} \right)$ to $\mathcal{B}\left( \mathcal{K} \right)$, ${{A}_{1}},\ldots ,{{A}_{n}}$ be self-adjoint operators with spectra contained in $I$, and let $\left\{ {{w}_{i}} \right\},J,{{J}^{c}}$ be as in Theorem \ref{24}. Then for any  $r\ge 1$ and every unitarily invariant norm $\| \cdot \|_u$,
\begin{eqnarray}\label{26}
&&\left\| {{\left( \sum\limits_{i=1}^{n}{{{w}_{i}}{{\Phi }_{i}}\left( {{A}_{i}} \right)} \right)}^{r}} \right\|_u\nonumber \\
&&\le \left\| {{\left( {{\omega }_{J}}{{\left( \frac{1}{{{\omega }_{J}}}\sum\limits_{i\in J}{{{w}_{i}}{{\Phi }_{i}}\left( A_{i}^{r} \right)} \right)}^{\frac{1}{r}}}+{{\omega }_{{{J}^{c}}}}{{\left( \frac{1}{{{\omega }_{{{J}^{c}}}}}\sum\limits_{i\in {{J}^{c}}}{{{w}_{i}}{{\Phi }_{i}}\left( A_{i}^{r} \right)} \right)}^{\frac{1}{r}}} \right)}^{r}} \right\|_u  \\ 
&& \le \left\| \sum\limits_{i=1}^{n}{{{w}_{i}}{{\Phi }_{i}}\left( A_{i}^{r} \right)} \right\|_u.  \nonumber
\end{eqnarray}
In particular,
\begin{eqnarray}\label{27}
&&\left\| {{\left( \sum\limits_{i=1}^{n}{{{w}_{i}}X_{i}^{*}{{A}_{i}}{{X}_{i}}} \right)}^{r}} \right\|_u\nonumber \\
&&\le \left\| {{\left( {{\omega }_{J}}{{\left( \frac{1}{{{\omega }_{J}}}\sum\limits_{i\in J}{{{w}_{i}}X_{i}^{*}A_{i}^{r}{{X}_{i}}} \right)}^{\frac{1}{r}}}+{{\omega }_{{{J}^{c}}}}{{\left( \frac{1}{{{\omega }_{{{J}^{c}}}}}\sum\limits_{i\in {{J}^{c}}}{{{w}_{i}}X_{i}^{*}A_{i}^{r}{{X}_{i}}} \right)}^{\frac{1}{r}}} \right)}^{r}} \right\|_u  \\ 
&& \le \left\| \sum\limits_{i=1}^{n}{{{w}_{i}}X_{i}^{*}A_{i}^{r}{{X}_{i}}} \right\|_u  \nonumber
\end{eqnarray}
where each $X_i$ ($i=1,2,\cdots,n$) is an isometry.
\end{corollary}
\begin{proof}
Of course, the inequality \eqref{27} is a direct consequence of inequality \eqref{26}, so we prove \eqref{26}. It follows from Remark \ref{21} that  	
\begin{eqnarray*}
 \left\| \sum\limits_{i=1}^{n}{{{w}_{i}}{{\Phi }_{i}}\left( A_{i}^{\frac{1}{r}} \right)} \right\|_u&\le& \left\| {{\omega }_{J}}{{\left( \frac{1}{{{\omega }_{J}}}\sum\limits_{i\in J}{{{w}_{i}}{{\Phi }_{i}}\left( {{A}_{i}} \right)} \right)}^{\frac{1}{r}}}+{{\omega }_{{{J}^{c}}}}{{\left( \frac{1}{{{\omega }_{{{J}^{c}}}}}\sum\limits_{i\in {{J}^{c}}}{{{w}_{i}}{{\Phi }_{i}}\left( {{A}_{i}} \right)} \right)}^{\frac{1}{r}}} \right\|_u \\ 
& \le& \left\| {{\left( \sum\limits_{i=1}^{n}{{{w}_{i}}{{\Phi }_{i}}\left( {{A}_{i}} \right)} \right)}^{\frac{1}{r}}} \right\|_u  
\end{eqnarray*}
for any $r\ge 1$. Replacing ${{A}_{i}}$ by $A_{i}^{r}$, we get 
\begin{eqnarray}\label{25}
\left\| \sum\limits_{i=1}^{n}{{{w}_{i}}{{\Phi }_{i}}\left( {{A}_{i}} \right)} \right\|_u&\le& \left\| {{\omega }_{J}}{{\left( \frac{1}{{{\omega }_{J}}}\sum\limits_{i\in J}{{{w}_{i}}{{\Phi }_{i}}\left( A_{i}^{r} \right)} \right)}^{\frac{1}{r}}}+{{\omega }_{{{J}^{c}}}}{{\left( \frac{1}{{{\omega }_{{{J}^{c}}}}}\sum\limits_{i\in {{J}^{c}}}{{{w}_{i}}{{\Phi }_{i}}\left( A_{i}^{r} \right)} \right)}^{\frac{1}{r}}} \right\|_u\nonumber  \\ 
& \le & \left\| {{\left( \sum\limits_{i=1}^{n}{{{w}_{i}}{{\Phi }_{i}}\left( A_{i}^{r} \right)} \right)}^{\frac{1}{r}}} \right\|_u.  
\end{eqnarray}
It is well-known that ${{\left\| X \right\|}_{r}}={{\left\| {{\left| X \right|}^{r}} \right\|}^{\frac{1}{r}}}$ defines a unitarily invariant norm. So \eqref{25} implies
\[\begin{aligned}
 \left\| {{\left( \sum\limits_{i=1}^{n}{{{w}_{i}}{{\Phi }_{i}}\left( {{A}_{i}} \right)} \right)}^{r}} \right\|_u&\le \left\| {{\left( {{\omega }_{J}}{{\left( \frac{1}{{{\omega }_{J}}}\sum\limits_{i\in J}{{{w}_{i}}{{\Phi }_{i}}\left( A_{i}^{r} \right)} \right)}^{\frac{1}{r}}}+{{\omega }_{{{J}^{c}}}}{{\left( \frac{1}{{{\omega }_{{{J}^{c}}}}}\sum\limits_{i\in {{J}^{c}}}{{{w}_{i}}{{\Phi }_{i}}\left( A_{i}^{r} \right)} \right)}^{\frac{1}{r}}} \right)}^{r}} \right\|_u \\ 
& \le \left\| \sum\limits_{i=1}^{n}{{{w}_{i}}{{\Phi }_{i}}\left( A_{i}^{r} \right)} \right\|_u.  
\end{aligned}\]
The proof is complete.
\end{proof}
Kubo and Ando \cite{k} showed that for every operator mean $\sigma $ there exists an operator monotone function $f:\left( 0,\infty  \right)\to \left( 0,\infty  \right)$ such that
\begin{equation}\label{23}
A\sigma B={{A}^{\frac{1}{2}}}f\left( {{A}^{\frac{-1}{2}}}{{B}^{-1}}{{A}^{\frac{-1}{2}}} \right){{A}^{\frac{1}{2}}}
\end{equation}
for all $A,B>0$. They also proved that if $f:\left( 0,\infty  \right)\to \left( 0,\infty  \right)$ is operator monotone, the binary operation defined by \eqref{23} is an operator mean.

We know that (see the estimate (16) in \cite{kr}) if $\sigma $ is an operator mean (in the Kubo-Ando sense) and ${{A}_{i}},{{B}_{i}}>0$, then
\begin{equation}\label{22}
\sum\limits_{i=1}^{n}{{{w}_{i}}\left( {{A}_{i}}\sigma {{B}_{i}} \right)}\le \left( \sum\limits_{i=1}^{n}{{{w}_{i}}{{A}_{i}}} \right)\sigma \left( \sum\limits_{i=1}^{n}{{{w}_{i}}{{B}_{i}}} \right).
\end{equation}
The following corollary can be regarded as a refinement and generalization of the inequality \eqref{22}.
\begin{corollary}\label{15}
		Let $\sigma $ be an operator mean,  ${{\Phi }_{1}},\ldots ,{{\Phi }_{n}}$ be normalized positive linear mappings from $\mathcal{B}\left( \mathcal{H} \right)$ to $\mathcal{B}\left( \mathcal{K} \right)$, ${{A}_{1}},\ldots ,{{A}_{n}}$, ${{B}_{1}},\ldots ,{{B}_{n}}$ be strictly positive operators with spectra contained in $I$, and let $\left\{ {{w}_{i}} \right\},J,{{J}^{c}}$ be as in Theorem \ref{24}. Then
\[\begin{aligned}
& \sum\limits_{i=1}^{n}{{{w}_{i}}{{\Phi }_{i}}\left( {{A}_{i}}\sigma {{B}_{i}} \right)} \\ 
& \le \left( \sum\limits_{i\in J}{{{w}_{i}}{{\Phi }_{i}}\left( {{A}_{i}} \right)} \right)\sigma \left( \sum\limits_{i\in J}{{{w}_{i}}{{\Phi }_{i}}\left( {{B}_{i}} \right)} \right)+\left( \sum\limits_{i\in {{J}^{c}}}{{{w}_{i}}{{\Phi }_{i}}\left( {{A}_{i}} \right)} \right)\sigma \left( \sum\limits_{i\in {{J}^{c}}}{{{w}_{i}}{{\Phi }_{i}}\left( {{B}_{i}} \right)} \right) \\ 
& \le \left( \sum\limits_{i=1}^{n}{{{w}_{i}}{{\Phi }_{i}}\left( {{A}_{i}} \right)} \right)\sigma \left( \sum\limits_{i=1}^{n}{{{w}_{i}}{{\Phi }_{i}}\left( {{B}_{i}} \right)} \right).
\end{aligned}\]
\end{corollary}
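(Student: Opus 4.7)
The plan is to derive Corollary \ref{15} by iterating the Kubo--Ando inequality \eqref{22} at two different levels of granularity, supplemented with Ando's classical fact that $\Phi(A\sigma B)\le \Phi(A)\sigma \Phi(B)$ for every normalized positive linear map $\Phi$ and all strictly positive $A,B$. The absence of the scaling factors $\omega_J,\omega_{J^c}$ in the middle expression, in contrast with the earlier refinements in this paper, reflects the positive homogeneity of operator means, namely $(\alpha A)\sigma(\alpha B)=\alpha (A\sigma B)$ for $\alpha>0$.

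For the right-hand inequality I would introduce the abbreviations
\[X_J:=\sum_{i\in J} w_i\Phi_i(A_i),\quad Y_J:=\sum_{i\in J} w_i\Phi_i(B_i),\]
and analogously $X_{J^c},Y_{J^c}$. Applying \eqref{22} to the two-term collection $(X_J,Y_J),(X_{J^c},Y_{J^c})$ with weights both equal to $1$ yields $X_J\sigma Y_J+X_{J^c}\sigma Y_{J^c}\le (X_J+X_{J^c})\sigma(Y_J+Y_{J^c})$, whose right-hand side is exactly $\bigl(\sum_{i=1}^n w_i\Phi_i(A_i)\bigr)\sigma\bigl(\sum_{i=1}^n w_i\Phi_i(B_i)\bigr)$. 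For the left-hand inequality I would split the sum $\sum_{i=1}^n w_i\Phi_i(A_i\sigma B_i)$ as $\sum_{i\in J}+\sum_{i\in J^c}$, apply Ando's inequality term by term to replace each $\Phi_i(A_i\sigma B_i)$ by $\Phi_i(A_i)\sigma\Phi_i(B_i)$, and then invoke \eqref{22} on each of the two subcollections to obtain $\sum_{i\in J}w_i\bigl[\Phi_i(A_i)\sigma\Phi_i(B_i)\bigr]\le X_J\sigma Y_J$ together with the analogous bound on $J^c$; adding the two estimates produces the middle expression.

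I do not foresee any serious obstacle: the argument is essentially bookkeeping once the two ingredients are identified, and the only nontrivial input is Ando's inequality, which is classical. It is worth remarking that this proof does not invoke Theorem \ref{13}, since by \eqref{23} an operator mean corresponds to an operator monotone rather than an operator convex function, so the natural tools here are \eqref{22} and Ando's inequality rather than the operator Jensen machinery developed earlier in the paper.
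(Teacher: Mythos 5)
Your proof is correct, but it takes a genuinely different route from the paper's. The paper obtains the corollary as a specialization of Theorem \ref{13}: it applies a two-variable form of the refined operator Jensen inequality to the jointly operator concave function $F(A,B)=A\sigma B$, producing the chain \eqref{14} with the weights $\omega_J,\omega_{J^c}$ still present, and then removes them by the positive homogeneity of operator means --- precisely the observation you make at the outset. You instead bypass the Jensen machinery altogether: the second inequality comes from applying the superadditivity \eqref{22} to the two-block collection $(X_J,Y_J),(X_{J^c},Y_{J^c})$, and the first from combining Ando's inequality $\Phi(A\sigma B)\le\Phi(A)\sigma\Phi(B)$ termwise with \eqref{22} on each of the subcollections indexed by $J$ and $J^c$. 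Both steps are sound: \eqref{22} does not require the weights to be normalized (and in any case homogeneity lets you rescale), and Ando's inequality is valid for every positive linear map and every Kubo--Ando mean. Your argument is more elementary and avoids the two-variable extension of Theorem \ref{13}, which the paper invokes without proof; the price is importing Ando's inequality, which the paper never states explicitly (though it is classical and is in effect contained in the two-variable Choi--Davis--Jensen step the paper relies on). The paper's route, by contrast, presents the corollary as a direct consequence of its main refinement theorem, which is the organizing theme of the section.
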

\begin{proof}
If $F\left( \cdot,\cdot \right)$ is a jointly operator concave function, then Theorem \ref{13} implies
\begin{eqnarray}\label{14}
&& \sum\limits_{i=1}^{n}{{{w}_{i}}{{\Phi }_{i}}\left( F\left( {{A}_{i}},{{B}_{i}} \right) \right)} \le {{\omega }_{J}}F\left( \frac{1}{{{\omega }_{J}}}\sum\limits_{i\in J}{{{w}_{i}}{{\Phi }_{i}}\left( {{A}_{i}} \right)},\frac{1}{{{\omega }_{J}}}\sum\limits_{i\in J}{{{w}_{i}}{{\Phi }_{i}}\left( {{B}_{i}} \right)} \right) \nonumber \\ 
&& +{{\omega }_{{{J}^{c}}}}F\left( \frac{1}{{{\omega }_{{{J}^{c}}}}}\sum\limits_{i\in {{J}^{c}}}{{{w}_{i}}{{\Phi }_{i}}\left( {{A}_{i}} \right)},\frac{1}{{{\omega }_{{{J}^{c}}}}}\sum\limits_{i\in {{J}^{c}}}{{{w}_{i}}{{\Phi }_{i}}\left( {{B}_{i}} \right)} \right) \nonumber \\
&& \le F\left( \sum\limits_{i=1}^{n}{{{w}_{i}}{{\Phi }_{i}}\left( {{A}_{i}} \right)},\sum\limits_{i=1}^{n}{{{w}_{i}}{{\Phi }_{i}}\left( {{B}_{i}} \right)} \right).
\end{eqnarray}
It is well-known that $F\left( A,B \right)=A\sigma B$ is jointly concave \cite{ando}, so it follows from \eqref{14} that
\begin{eqnarray*}
&& \sum\limits_{i=1}^{n}{{{w}_{i}}{{\Phi }_{i}}\left( {{A}_{i}}\sigma {{B}_{i}} \right)}  \le {{\omega }_{J}}\left( \left( \frac{1}{{{\omega }_{J}}}\sum\limits_{i\in J}{{{w}_{i}}{{\Phi }_{i}}\left( {{A}_{i}} \right)} \right)\sigma \left( \frac{1}{{{\omega }_{J}}}\sum\limits_{i\in J}{{{w}_{i}}{{\Phi }_{i}}\left( {{B}_{i}} \right)} \right) \right) \\ 
&&\qquad +{{\omega }_{{{J}^{c}}}}\left( \left( \frac{1}{{{\omega }_{{{J}^{c}}}}}\sum\limits_{i\in {{J}^{c}}}{{{w}_{i}}{{\Phi }_{i}}\left( {{A}_{i}} \right)} \right)\sigma \left( \frac{1}{{{\omega }_{{{J}^{c}}}}}\sum\limits_{i\in {{J}^{c}}}{{{w}_{i}}{{\Phi }_{i}}\left( {{B}_{i}} \right)} \right) \right) \\ 
&& =\left( \sum\limits_{i\in J}{{{w}_{i}}{{\Phi }_{i}}\left( {{A}_{i}} \right)} \right)\sigma \left( \sum\limits_{i\in J}{{{w}_{i}}{{\Phi }_{i}}\left( {{B}_{i}} \right)} \right)+\left( \sum\limits_{i\in {{J}^{c}}}{{{w}_{i}}{{\Phi }_{i}}\left( {{A}_{i}} \right)} \right)\sigma \left( \sum\limits_{i\in {{J}^{c}}}{{{w}_{i}}{{\Phi }_{i}}\left( {{B}_{i}} \right)} \right) \\ 
&& \le \left( \sum\limits_{i=1}^{n}{{{w}_{i}}{{\Phi }_{i}}\left( {{A}_{i}} \right)} \right)\sigma \left( \sum\limits_{i=1}^{n}{{{w}_{i}}{{\Phi }_{i}}\left( {{B}_{i}} \right)} \right),
\end{eqnarray*}
thanks to the homogeneity property of operator means. This completes the proof.
\end{proof}

By setting $\sigma ={{\sharp}_{v}}\left( v\in \left[ 0,1 \right] \right)$ and ${{\Phi }_{i}}\left( {{X}_{i}} \right)={{X}_{i}}\left( i=1,\ldots ,n \right)$ in Corollary \ref{15}, we improve the weighted operator H\"older and Cauchy inequalities in the following way:
\begin{corollary}
			Let ${{\Phi }_{1}},\ldots ,{{\Phi }_{n}}$ be normalized positive linear mappings from $\mathcal{B}\left( \mathcal{H} \right)$ to $\mathcal{B}\left( \mathcal{K} \right)$, ${{A}_{1}},\ldots ,{{A}_{n}}$, ${{B}_{1}},\ldots ,{{B}_{n}}$ be strictly positive operators with spectra contained in $I$, and let $\left\{ {{w}_{i}} \right\},J,{{J}^{c}}$ be as in Theorem \ref{24}. Then for any $v\in \left[ 0,1 \right]$,
\[\begin{aligned}
 \sum\limits_{i=1}^{n}{{{w}_{i}}\left( {{A}_{i}}{{\sharp}_{v}}{{B}_{i}} \right)}&\le \left( \sum\limits_{i\in J}{{{w}_{i}}{{A}_{i}}} \right){{\sharp}_{v}}\left( \sum\limits_{i\in J}{{{w}_{i}}{{B}_{i}}} \right)+\left( \sum\limits_{i\in {{J}^{c}}}{{{w}_{i}}{{A}_{i}}} \right){{\sharp}_{v}}\left( \sum\limits_{i\in {{J}^{c}}}{{{w}_{i}}{{B}_{i}}} \right) \\ 
& \le \left( \sum\limits_{i=1}^{n}{{{w}_{i}}{{A}_{i}}} \right){{\sharp}_{v}}\left( \sum\limits_{i=1}^{n}{{{w}_{i}}{{B}_{i}}} \right).  
\end{aligned}\]
In particular,
\[\begin{aligned}
 \sum\limits_{i=1}^{n}{{{w}_{i}}\left( {{A}_{i}}\sharp{{B}_{i}} \right)}&\le \left( \sum\limits_{i\in J}{{{w}_{i}}{{A}_{i}}} \right)\sharp\left( \sum\limits_{i\in J}{{{w}_{i}}{{B}_{i}}} \right)+\left( \sum\limits_{i\in {{J}^{c}}}{{{w}_{i}}{{A}_{i}}} \right)\sharp\left( \sum\limits_{i\in {{J}^{c}}}{{{w}_{i}}{{B}_{i}}} \right) \\ 
& \le \left( \sum\limits_{i=1}^{n}{{{w}_{i}}{{A}_{i}}} \right)\sharp\left( \sum\limits_{i=1}^{n}{{{w}_{i}}{{B}_{i}}} \right).  
\end{aligned}\]
\end{corollary}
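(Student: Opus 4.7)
The plan is to deduce both inequality chains as direct specializations of Corollary \ref{15}. First, I would invoke the Kubo--Ando characterization to confirm that the $v$-weighted geometric mean $\sharp_v$ is an operator mean for every $v \in [0,1]$, since it arises from the operator monotone generating function $f(t) = t^v$ on $(0,\infty)$ via the representation formula \eqref{23}. This legitimizes the choice $\sigma = \sharp_v$ in Corollary \ref{15}.

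Next, to match the stated $\Phi_i$-free formulation, I would take $\Phi_i = \mathrm{id}_{\mathcal{B}(\mathcal{H})}$ for each $i = 1,\dots,n$ (setting $\mathcal{K} = \mathcal{H}$), which is trivially a normalized positive linear map. Substituting $\sigma = \sharp_v$ and $\Phi_i = \mathrm{id}$ into the three-term chain of Corollary \ref{15} collapses the $\Phi_i$'s out and converts $\sigma$ to $\sharp_v$ throughout: the left-hand side becomes $\sum w_i (A_i \sharp_v B_i)$, the middle term splits into the two $\sharp_v$-combinations indexed by $J$ and $J^c$, and the right-hand side is the global $\sharp_v$-combination. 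This is precisely the first displayed chain to be proved.

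For the second chain, I would specialize further to $v = \tfrac{1}{2}$, whereupon $\sharp_v = \sharp$ by definition of the standard geometric mean, and the first chain immediately yields the second.

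I do not anticipate any genuine obstacle here, because Corollary \ref{15} has already absorbed the analytic content through the joint operator concavity of $(A,B) \mapsto A \sigma B$ and the homogeneity of operator means. The only routine bookkeeping is verifying that with $\Phi_i = \mathrm{id}$ the internal factors $\omega_J$ and $\omega_{J^c}$ cancel correctly against the external weights inside the $\sharp_v$-operations, which is where the positive homogeneity $(\alpha A) \sharp_v (\alpha B) = \alpha (A \sharp_v B)$ for scalars $\alpha > 0$ is silently invoked --- exactly as in the last step of the proof of Corollary \ref{15}.
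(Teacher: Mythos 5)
Your proposal is correct and follows exactly the route the paper intends: the corollary is obtained by substituting $\sigma=\sharp_v$ (an operator mean since $t\mapsto t^v$ is operator monotone) and $\Phi_i=\mathrm{id}$ into Corollary \ref{15}, with the case $v=\tfrac{1}{2}$ giving the second chain. The homogeneity bookkeeping you mention is already absorbed into the statement of Corollary \ref{15}, so nothing further is needed.
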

Recall that if $f$ is operator convex, then \eqref{23} defines \cite{niku} the perspective of $f$ denoted by ${{\mathcal{P}}_{f}}\left( A\mid B \right)$, i.e.,
\[{{\mathcal{P}}_{f}}\left( A\mid B \right)={{A}^{\frac{1}{2}}}f\left( {{A}^{-\frac{1}{2}}}B{{A}^{-\frac{1}{2}}} \right){{A}^{\frac{1}{2}}}.\]
The operator perspective enjoys the following property:
\[{{\mathcal{P}}_{f}}\left( \Phi \left( A \right)\mid\Phi \left( B \right) \right)\le \Phi \left( {{\mathcal{P}}_{f}}\left( A\mid B \right) \right).\]
This nice inequality has been proved by Hansen \cite{g,han}.
Let us note that the perspective of an operator convex function is operator convex
as a function of two variables (see \cite[Theorem 2.2]{niku}). 

So, taking into account the above and applying Theorem \ref{13}, we get the following result.
\begin{corollary}
		Let $f:I\to \mathbb{R}$ be an operator convex function,  ${{\Phi }_{1}},\ldots ,{{\Phi }_{n}}$ be normalized positive linear mappings from $\mathcal{B}\left( \mathcal{H} \right)$ to $\mathcal{B}\left( \mathcal{K} \right)$, ${{A}_{1}},\ldots ,{{A}_{n}}$ be self-adjoint operators with spectra contained in $I$, and let $\left\{ {{w}_{i}} \right\},J,{{J}^{c}}$ be as in Theorem \ref{24}. Then
\begin{eqnarray*}
&& {{\mathcal{P}}_{f}}\left( \sum\limits_{i=1}^{n}{{{w}_{i}}{{\Phi }_{i}}\left( {{A}_{i}} \right)}\mid\sum\limits_{i=1}^{n}{{{w}_{i}}{{\Phi }_{i}}\left( {{B}_{i}} \right)} \right)  \le {{\omega }_{J}}{{\mathcal{P}}_{f}}\left( \frac{1}{{{\omega }_{J}}}\sum\limits_{i\in J}{{{w}_{i}}{{\Phi }_{i}}\left( {{A}_{i}} \right)}\mid\frac{1}{{{\omega }_{J}}}\sum\limits_{i\in J}{{{w}_{i}}{{\Phi }_{i}}\left( {{B}_{i}} \right)} \right)\\
&&+{{\omega }_{{{J}^{c}}}}{{\mathcal{P}}_{f}}\left( \frac{1}{{{\omega }_{{{J}^{c}}}}}\sum\limits_{i\in {{J}^{c}}}{{{w}_{i}}{{\Phi }_{i}}\left( {{A}_{i}} \right)}\mid\frac{1}{{{\omega }_{{{J}^{c}}}}}\sum\limits_{i\in {{J}^{c}}}{{{w}_{i}}{{\Phi }_{i}}\left( {{A}_{i}} \right)} \right)  \le \sum\limits_{i=1}^{n}{{{w}_{i}}{{\Phi }_{i}}\left( {{\mathcal{P}}_{f}}\left( {{A}_{i}}\mid{{B}_{i}} \right) \right)}.
\end{eqnarray*}
\end{corollary}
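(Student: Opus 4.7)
The plan is to lift the argument of Corollary \ref{15} to the jointly operator convex two-variable function $F(A,B):=\mathcal{P}_f(A\mid B)$. Joint operator convexity of this $F$ has just been recorded via \cite[Theorem 2.2]{niku}, so the only additional ingredient needed is the two-variable analogue of the Choi--Davis--Jensen bound \eqref{4}, namely
\[
F\left(\sum_{i=1}^n w_i\Phi_i(A_i),\sum_{i=1}^n w_i\Phi_i(B_i)\right)\le \sum_{i=1}^n w_i\Phi_i(F(A_i,B_i)).
\]

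To establish this two-variable inequality I would chain two facts. First, Hansen's inequality, recorded immediately above the statement, gives $\mathcal{P}_f(\Phi_i(A_i)\mid\Phi_i(B_i))\le \Phi_i(\mathcal{P}_f(A_i\mid B_i))$ for each $i$; multiplying by $w_i$ and summing bounds $\sum_i w_i \mathcal{P}_f(\Phi_i(A_i)\mid\Phi_i(B_i))$ above by $\sum_i w_i \Phi_i(\mathcal{P}_f(A_i\mid B_i))$. Second, joint operator convexity of $\mathcal{P}_f$, applied to the pair of convex combinations $\sum_i w_i\Phi_i(A_i)$ and $\sum_i w_i\Phi_i(B_i)$, bounds $\mathcal{P}_f(\sum_i w_i\Phi_i(A_i)\mid\sum_i w_i\Phi_i(B_i))$ above by $\sum_i w_i\,\mathcal{P}_f(\Phi_i(A_i)\mid\Phi_i(B_i))$. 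Concatenating these two bounds yields the desired two-variable Choi--Davis--Jensen inequality for $\mathcal{P}_f$.

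With this ingredient in hand, I would repeat verbatim the bookkeeping of the proof of Theorem \ref{13}, as already carried out in Corollary \ref{15}. The right-hand inequality of the statement is obtained by applying the two-variable Choi--Davis--Jensen inequality separately to the block indexed by $J$ (with renormalized weights $w_i/\omega_J$) and to the block indexed by $J^c$, and then multiplying by $\omega_J$, respectively $\omega_{J^c}$, and adding. The left-hand inequality is nothing but joint operator convexity of $\mathcal{P}_f$ applied to the decomposition
\[
\sum_{i=1}^n w_i\Phi_i(A_i)=\omega_J\Bigl(\tfrac{1}{\omega_J}\sum_{i\in J}w_i\Phi_i(A_i)\Bigr)+\omega_{J^c}\Bigl(\tfrac{1}{\omega_{J^c}}\sum_{i\in J^c}w_i\Phi_i(A_i)\Bigr),
\]
together with the analogous decomposition of $\sum w_i\Phi_i(B_i)$.

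The main obstacle is the verification of the two-variable Choi--Davis--Jensen step itself: although the single-variable version \eqref{4} is dismissed as ``a small exercise'' in the proof of Theorem \ref{13}, the two-variable extension for $\mathcal{P}_f$ genuinely requires both Hansen's inequality and the joint convexity of the perspective, and should be written out explicitly. Beyond this, the argument is a direct transcription of the proof of Corollary \ref{15} with the roles of the middle inequalities reversed, since $\mathcal{P}_f$ is jointly operator convex rather than jointly operator concave; no new analytic input is required.
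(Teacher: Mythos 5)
Your proposal is correct and matches the paper's intent exactly: the paper states this corollary without a written proof, asserting only that it follows by "applying Theorem \ref{13}" to the perspective, using Hansen's inequality $\mathcal{P}_f(\Phi(A)\mid\Phi(B))\le\Phi(\mathcal{P}_f(A\mid B))$ and the joint operator convexity of $\mathcal{P}_f$ from \cite[Theorem 2.2]{niku} — precisely the two ingredients you combine to get the two-variable Choi--Davis--Jensen step and then run the $J$/$J^c$ block decomposition as in Corollary \ref{15} (with the inequalities reversed for convexity). Your explicit verification of the two-variable step is a useful filling-in of a detail the paper leaves implicit, but it is the same argument.
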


\section*{Acknowledgement}
The author (S.F.) was partially supported by JSPS KAKENHI Grant Number 16K05257.

\end{document}